\newcounter{minutes}\setcounter{minutes}{\time}
\newcounter{hours}\setcounter{hours}{\time}
\newcommand{\ls}{\mathbf{L}}
\newcommand{\ms}{\mathbf{M}}
\newcommand{\dt}{{\rm d}t}
\newcommand{\ds}{{\rm d}s}
\title[Functional inequalities for modified Struve functions]{Functional inequalities involving modified Struve functions}
\author[\'Arp\'ad Baricz]{\'Arp\'ad Baricz}
\address{Department of Economics, Babe\c{s}-Bolyai University, Cluj-Napoca 400591, Romania} \email{bariczocsi@yahoo.com}
\author{Tibor K. Pog\'any}
\address{Faculty of Maritime Studies, University of Rijeka, Rijeka 51000, Croatia}
\email{poganj@brod.pfri.hr}
\keywords{Modified Struve function, modified Bessel function, Tur\'an type inequality.}
\subjclass[2010]{Primary 33C10, Secondary 39B62.}
\newtheorem{theorem}{Theorem}
\newtheorem{remark}{Remark}
\begin{document}

\def\thefootnote{}
\footnotetext{ \texttt{File:~\jobname .tex,
          printed: \number\year-0\number\month-\number\day,
          \thehours.\ifnum\theminutes<10{0}\fi\theminutes}
} \makeatletter\def\thefootnote{\@arabic\c@footnote}\makeatother

\maketitle
\allowdisplaybreaks

\begin{center}
{\em Dedicated to Professor Paul L. Butzer on the occasion of his 85th birthday}
\end{center}

\begin{abstract}
In this paper our aim is to prove some monotonicity and convexity results for the modified Struve function of the second kind by using its integral representation. Moreover, as consequences of these results, we present some functional inequalities (like Tur\'an type inequalities) as well as lower and upper bounds for modified Struve function of the second kind and its logarithmic derivative.
\end{abstract}

\section{\bf Introduction}
\setcounter{equation}{0}

In the last decades many functional inequalities and monotonicity properties for special functions (like Bessel, modified Bessel, Gaussian hypergeometric, Kummer hypergeometric) and their combinations have been established by researchers, motivated by several problems that arise in wave mechanics, fluid mechanics, electrical engineering, quantum billiards, biophysics, mathematical physics, finite elasticity, probability and statistics, special relativity and radar signal processing. Although the inequalities involving the quotients of modified Bessel functions of the first and second kind are interesting in their own right, recently the lower and upper bounds for such quotients have received increasing attention, since they play an important role in various problems of mathematical physics and electrical engineering. For more details, see for example \cite{bariczedin} and the references therein. The modified Struve functions of the first and second kind are related to modified Bessel functions of the first kind, and thus their properties can be useful in problems of mathematical physics. In \cite{joshi} Joshi and Nalwaya presented some two-sided inequalities for modified Struve functions of the first kind and for their ratios. They also deduced some Tur\'an and Wronski type inequalities for modified Struve functions of the first kind by using a generalized hypergeometric function representation of the Cauchy product of two modified Struve functions of the first kind. Motivated by the above results, by using a known result on the monotonicity of quotients of MacLaurin series, recently in \cite{bp} we proved some monotonicity and convexity results for the modified Struve functions of the first kind. Moreover, as consequences of these results, we presented some functional inequalities as well as lower and upper bounds for modified Struve functions of the first kind. In this paper our aim is to continue the study from \cite{bp}, but for the modified Struve functions of the second kind. The key tools in the proofs of the main results are the techniques developed in the extensive study of modified Bessel functions of the first and second kind and their ratios. The difficulty in the study of the modified Struve function consists in the fact that the modified Struve differential equation is not homogeneous, however, as we can see below, the integral representation of modified Struve function of the second kind is very useful in order to study its monotonicity and convexity properties.

\section{\bf Modified Struve function: Monotonicity patterns and functional inequalities}
\setcounter{equation}{0}

The modified Struve functions of the first and second kind, $\ls_{\nu}$ and $\ms_{\nu}$ are particular solutions of the modified Struve equation \cite[p. 288]{nist}
   \begin{equation} \label{eqStruve}
      x^2y''(x)+xy'(x) - (x^2+\nu^2)y(x) = \frac{x^{\nu+1}}{\sqrt{\pi}2^{\nu-1}\Gamma\left(\nu+\frac{1}{2}\right)}.
   \end{equation}
The modified Struve function of the second kind has the power series representation
   \begin{equation} \label{L1}
      \ms_{\nu}(x) = \ls_{\nu}(x)-I_{\nu}(x) = \sum_{n = 0}^\infty \frac{\left(\frac{x}{2}\right)^{2n+\nu+1}}
                     {\Gamma\left(n+\frac{3}{2}\right)\Gamma\left(n+\nu+\frac{3}{2}\right)}
                   - \sum_{n = 0}^\infty \frac{\left(\frac{x}{2}\right)^{2n+\nu}}
                     {\Gamma\left(n+1\right)\Gamma\left(n+\nu+1\right)},
   \end{equation}
where $I_{\nu}$ stands for the modified Bessel function of the first kind.

Now, consider the function $\mathcal{M}_{\nu}:(0,\infty)\to\mathbb{R},$ defined by
   \begin{equation}\label{defm}
      \mathcal{M}_{\nu}(x)=-2^{\nu}\Gamma\left(\nu+\frac{1}{2}\right)x^{-\nu}\ms_{\nu}(x),
   \end{equation}
which for $\nu>-\frac{1}{2}$ has the integral representation \cite[p. 292]{nist}
   \begin{equation}\label{integr}
      \mathcal{M}_{\nu}(x)=\frac{2}{\sqrt{\pi}}\int_0^1\left(1-t^2\right)^{\nu-\frac{1}{2}}e^{-xt}\dt
   \end{equation}

Our main result is the following theorem.

\begin{theorem}\label{th1}
The following assertions are true:
\begin{enumerate}
   \item[\bf a.] The function $x\mapsto \mathcal{M}_{\nu}(x)$ is completely monotonic and log-convex on $(0,\infty)$ for
                 all   $\nu>-\frac{1}{2}.$
   \item[\bf b.] The function $\nu\mapsto \mathcal{M}_{\nu}(x)$ is completely monotonic and log-convex on
                 $\left(-\frac{1}{2},\infty\right)$ for all $x>0.$
   \item[\bf c.] The function $x\mapsto -\ms_{\nu}(x)$ is completely monotonic and log-convex on $(0,\infty)$ for all
                 $\nu\in\left[-\frac{1}{2},0\right].$
\end{enumerate}
Furthermore, for all $x>0$ the following inequalities are valid:
   \begin{equation}\label{bound0}
      \mathcal{M}_{\nu}(x)< \frac{\Gamma\left(\nu+\frac{1}{2}\right)}{\Gamma(\nu+1)},\ \ \ \nu>-\frac{1}{2},
   \end{equation}
   \begin{equation}\label{ineqturan}
      0<\left[\ms_{\nu}(x)\right]^2-\ms_{\nu-1}(x)\ms_{\nu+1}(x) < \frac{\left[\ms_{\nu}(x)\right]^2}{\nu+\frac{1}{2}},\ \ \
       \nu>\frac{1}{2},
   \end{equation}
   \begin{equation}\label{quot1}
      \frac{x\ms_{\nu}'(x)}{\ms_{\nu}(x)}<\nu, \ \ \ \nu>-\frac{1}{2},
   \end{equation}
   \begin{equation}\label{quot2}
      - \sqrt{x^2+\nu^2}<\frac{x\ms_{\nu}'(x)}{\ms_{\nu}(x)}<\sqrt{x^2+\nu^2}, \ \ \ \nu>\frac{1}{2},
   \end{equation}
\end{theorem}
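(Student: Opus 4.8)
The plan is to obtain every assertion from the integral representation \eqref{integr}, which exhibits $\mathcal{M}_\nu(x)$ as a positive constant times the Laplace transform of the positive measure $(1-t^2)^{\nu-\frac12}\dt$ on $[0,1]$, a finite measure precisely for $\nu>-\frac12$. For part \textbf{a} I would differentiate \eqref{integr} under the integral sign, obtaining $(-1)^n\mathcal{M}_\nu^{(n)}(x)=\frac{2}{\sqrt\pi}\int_0^1 t^n(1-t^2)^{\nu-\frac12}e^{-xt}\dt>0$, which is complete monotonicity in $x$; log-convexity is then automatic, because every Laplace transform of a positive measure is log-convex (equivalently, apply the Cauchy--Schwarz inequality to $e^{-\frac{x+y}{2}t}=e^{-\frac x2 t}\cdot e^{-\frac y2 t}$). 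For part \textbf{b} I would differentiate \eqref{integr} with respect to $\nu$ instead: $(-1)^n\partial_\nu^n\mathcal{M}_\nu(x)=\frac{2}{\sqrt\pi}\int_0^1(1-t^2)^{\nu-\frac12}\bigl(\log\tfrac{1}{1-t^2}\bigr)^n e^{-xt}\dt>0$ since $\log\tfrac{1}{1-t^2}>0$ on $(0,1)$, and log-convexity in $\nu$ follows from Cauchy--Schwarz, using $(1-t^2)^{\frac{\nu_1+\nu_2}{2}-\frac12}=\bigl[(1-t^2)^{\nu_1-\frac12}\bigr]^{1/2}\bigl[(1-t^2)^{\nu_2-\frac12}\bigr]^{1/2}$. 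For part \textbf{c}, \eqref{defm} gives $-\ms_\nu(x)=\frac{x^\nu}{2^\nu\Gamma(\nu+\frac12)}\mathcal{M}_\nu(x)$, which for $\nu\in(-\frac12,0]$ is a positive constant times the product of the two completely monotonic functions $x\mapsto x^\nu$ (here the hypothesis $\nu\le 0$ is used) and $x\mapsto\mathcal{M}_\nu(x)$, hence is completely monotonic and therefore log-convex; the endpoint $\nu=-\frac12$ is handled separately using $-\ms_{-1/2}(x)=\sqrt{2/(\pi x)}\,e^{-x}$.

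The estimates \eqref{bound0}, \eqref{quot1}, \eqref{quot2} are short corollaries of the same representation. For \eqref{bound0} one bounds $e^{-xt}<1$ in \eqref{integr} and evaluates the Beta integral $\frac{2}{\sqrt\pi}\int_0^1(1-t^2)^{\nu-\frac12}\dt=\Gamma(\nu+\frac12)/\Gamma(\nu+1)$ (equivalently, by \textbf{a} the function $\mathcal{M}_\nu$ is strictly decreasing and this is its value at $0^+$). By \eqref{defm}, $x\ms_\nu'(x)/\ms_\nu(x)=\nu+x\mathcal{M}_\nu'(x)/\mathcal{M}_\nu(x)$, and from \eqref{integr} the quantity $x\mathcal{M}_\nu'(x)/\mathcal{M}_\nu(x)$ is $-x$ times a weighted average of $t\in(0,1)$ and therefore lies in $(-x,0)$; this yields $\nu-x<x\ms_\nu'(x)/\ms_\nu(x)<\nu$, which is \eqref{quot1}, and, together with the elementary inequalities $\nu<\sqrt{x^2+\nu^2}$ and $\nu-x>-\sqrt{x^2+\nu^2}$ (the latter because $\sqrt{x^2+\nu^2}>|x-\nu|$ for $\nu>0$), also \eqref{quot2}.

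The Tur\'an-type chain \eqref{ineqturan} is where the real work lies. Using $\ms_\alpha(x)=-2^{-\alpha}\Gamma(\alpha+\frac12)^{-1}x^\alpha\mathcal{M}_\alpha(x)$ and $\Gamma(\nu-\frac12)\Gamma(\nu+\frac32)=\tfrac{2\nu+1}{2\nu-1}\Gamma(\nu+\frac12)^2$, the double inequality \eqref{ineqturan} is equivalent to $\mathcal{M}_\nu(x)^2<\mathcal{M}_{\nu-1}(x)\mathcal{M}_{\nu+1}(x)<\tfrac{2\nu+1}{2\nu-1}\,\mathcal{M}_\nu(x)^2$. The left-hand inequality here is the log-convexity of $\nu\mapsto\mathcal{M}_\nu(x)$ from \textbf{b}, so it remains to prove the right-hand one, equivalently the positivity of the Tur\'an determinant of $\ms_\nu$. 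For this I would first establish, by integrating $\partial_t\bigl[t(1-t^2)^{\nu-\frac12}e^{-xt}\bigr]$ over $[0,1]$ (whose boundary term vanishes exactly when $\nu>\frac12$) together with the identity $\mathcal{M}_{\nu-1}(x)-\mathcal{M}_\nu(x)=\mathcal{M}_{\nu-1}''(x)$, the recurrence $(2\nu-1)\mathcal{M}_{\nu-1}(x)=2\nu\mathcal{M}_\nu(x)+x\mathcal{M}_\nu'(x)$. Applying it at indices $\nu$ and $\nu+1$ reduces the target inequality to $\mathcal{M}_\nu'(x)/\mathcal{M}_\nu(x)-\mathcal{M}_{\nu+1}'(x)/\mathcal{M}_{\nu+1}(x)<2/x$; but with respect to the probability measure $\mathbb{P}$ on $[0,1]$ whose density is proportional to $(1-t^2)^{\nu-\frac12}e^{-xt}$ the left-hand side equals $\mathrm{Cov}_{\mathbb{P}}(t,1-t^2)/\mathbb{E}_{\mathbb{P}}[1-t^2]=-\mathrm{Cov}_{\mathbb{P}}(t,t^2)/\mathbb{E}_{\mathbb{P}}[1-t^2]$, which is strictly negative by Chebyshev's integral inequality because $t\mapsto t$ and $t\mapsto t^2$ are both increasing on $[0,1]$; hence the left-hand side is in fact $<0<2/x$. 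Strictness throughout \eqref{ineqturan} comes from the strict Cauchy--Schwarz and Chebyshev inequalities.

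The only genuine obstacle is thus the lower bound in \eqref{ineqturan}: the point that is not a routine consequence of \eqref{integr} is producing the recurrence and recognizing that it turns the Tur\'an determinant of $\ms_\nu$ into a Chebyshev-type covariance of the increasing functions $t$ and $t^2$ against the tilted weight $(1-t^2)^{\nu-\frac12}e^{-xt}$, after which the sign is immediate.
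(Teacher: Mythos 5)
Your proposal is correct, and for parts \textbf{a}--\textbf{c}, the bound \eqref{bound0}, the upper half of \eqref{ineqturan} (via log-convexity in $\nu$), and \eqref{quot1} it follows essentially the paper's argument. It diverges in two places. First, for \eqref{quot2} the paper goes through the recurrences to the identity \eqref{L2.5} and needs the left-hand side of \eqref{ineqturan} as an input, whereas you get both sides of \eqref{quot2} directly from $\nu-x<x\ms_{\nu}'(x)/\ms_{\nu}(x)<\nu$ (the weighted-average bound $-1<\mathcal{M}_{\nu}'(x)/\mathcal{M}_{\nu}(x)<0$) together with the elementary $|x-\nu|<\sqrt{x^2+\nu^2}$ for $x,\nu>0$; this is shorter and decouples \eqref{quot2} from the Tur\'an inequality entirely. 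Second, for the lower Tur\'an bound the paper decomposes ${}_{\ms}\Delta_{\nu}$ into the Tur\'anians of $I_{\nu}$ and $\ls_{\nu}$ plus a cross term, quotes the known positivity of the first two, and proves positivity of the cross term by symmetrizing a double integral to produce the factor $(t^2-s^2)^2$; you instead stay entirely with $\mathcal{M}_{\nu}$, derive $(2\nu-1)\mathcal{M}_{\nu-1}(x)=2\nu\mathcal{M}_{\nu}(x)+x\mathcal{M}_{\nu}'(x)$ by integration by parts (this is equivalent to the paper's \eqref{rec1}), and reduce the claim to $\mathcal{M}_{\nu}'(x)/\mathcal{M}_{\nu}(x)-\mathcal{M}_{\nu+1}'(x)/\mathcal{M}_{\nu+1}(x)<2/x$, which you settle by identifying the left side as $-\mathrm{Cov}_{\mathbb{P}}(t,t^2)/\mathbb{E}_{\mathbb{P}}[1-t^2]<0$ under the tilted weight. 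I checked the normalization $\Gamma(\nu-\frac12)\Gamma(\nu+\frac32)=\frac{2\nu+1}{2\nu-1}\Gamma^2(\nu+\frac12)$, the algebraic reduction, and the covariance identity: all are correct, and your argument in fact yields the stronger conclusion that $\nu\mapsto\mathcal{M}_{\nu}'(x)/\mathcal{M}_{\nu}(x)$ is increasing. The paper's route has the merit of situating ${}_{\ms}\Delta_{\nu}(x)>0$ among the known Tur\'an inequalities for $I_{\nu}$ and $\ls_{\nu}$ and exhibiting the cross term as a manifestly positive integral, while yours is self-contained and avoids importing the external positivity results for ${}_{I}\Delta_{\nu}$ and ${}_{\ls}\Delta_{\nu}$.
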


\begin{proof}[\bf Proof]
{\bf a.} \& {\bf b.} By \eqref{defm} for $n,m\in\{0,1,\dots\}$ and $\nu>-\frac{1}{2}$ we have
   \[ (-1)^{n}\left[\mathcal{M}_{\nu}(x)\right]^{(n)} = \frac2{\sqrt{\pi}}\int_0^1t^n\left(1-t^2\right)^{\nu-\frac{1}{2}}e^{-xt}\dt,\]
   \[ (-1)^{m}\frac{\partial^m\mathcal{M}_{\nu}(x)}{\partial\nu^m} = \frac2{\sqrt{\pi}}\int_0^1
      \left(\log\frac{1}{1-t^2}\right)^m\left(1-t^2\right)^{\nu-\frac{1}{2}}e^{-xt}\dt.\]
Thus, the functions $x\mapsto \mathcal{M}_{\nu}(x)$ and $\nu\mapsto \mathcal{M}_{\nu}(x)$ are indeed completely monotonic and consequently are log-convex, since every completely monotonic function is log-convex, see \cite[p. 167]{widder}. Alternatively, the log-convexity of these functions can be proved also by using \eqref{integr} and the H\"older-Rogers inequality for integrals.

For inequality \eqref{bound0} just observe that $\mathcal{M}_{\nu}$ is decreasing on $(0,\infty)$ for all $\nu>-\frac{1}{2},$ and thus
   \[ \mathcal{M}_{\nu}(x) < \frac2{\sqrt{\pi}}\int_0^1\left(1-t^2\right)^{\nu-\frac{1}{2}}\dt
                           = \frac1{\sqrt{\pi}}\int_0^1s^{-\frac{1}{2}}(1-s)^{\nu-\frac{1}{2}}\ds
                           = \frac{\Gamma(\nu + \frac12)}{\Gamma(\nu+1)}.\]

{\bf c.} By \eqref{defm}
   \[ -\ms_{\nu}(x)=\frac{x^{\nu}\mathcal{M}_{\nu}(x)}{2^{\nu}\Gamma\left(\nu+\frac{1}{2}\right)}.\]
On the other hand, observe that $x\mapsto x^{\nu}$ is completely monotonic on $(0,\infty)$ for all $\nu\leq0.$ Thus, by part {\bf a} of the theorem the function $x\mapsto -\ms_{\nu}(x),$ as a product of two completely monotonic functions, is completely monotonic and log-convex on $(0,\infty)$ for all $\nu\in\left(-\frac{1}{2},0\right].$ Now, since (see \cite[p. 254]{nist} and \cite[p. 291]{nist})
   \[ \ms_{-\frac{1}{2}}(x)=\ls_{-\frac{1}{2}}(x)-I_{-\frac{1}{2}}(x) = \sqrt{\frac{2}{\pi x}}\sinh x
           -\sqrt{\frac{2}{\pi x}}\cosh x=-\sqrt{\frac{2}{\pi x}}\, e^{-x},\]
the function $x\mapsto -\ms_{-\frac{1}{2}}(x)$ is completely monotonic and log-convex as the product of the completely monotonic and log-convex functions $x\mapsto \sqrt{\frac{2}{\pi}}x^{-\frac{1}{2}}$ and $x\mapsto e^{-x}.$

Now, focus on the Tur\'an type inequality \eqref{ineqturan}. Since $\nu\mapsto \mathcal{M}_{\nu}(x)$ is log-convex on $\left(-\frac{1}{2},\infty\right)$ for $x>0,$ it follows that for all $\nu_1,\nu_2>-\frac{1}{2},$ $\alpha\in[0,1]$ and $x>0$ we have
   \[ \mathcal{M}_{\alpha\nu_1+(1-\alpha)\nu_2}(x) \leq
        \left[\mathcal{M}_{\nu_1}(x)\right]^{\alpha}\left[\mathcal{M}_{\nu_2}(x)\right]^{1-\alpha}.\]
Choosing $\nu_1= \nu-1,$ $\nu_2 = \nu+1$ and $\alpha=\frac{1}{2},$ the above inequality reduces to the Tur\'an type inequality
   \[ \left[\mathcal{M}_{\nu}(x)\right]^2 - \mathcal{M}_{\nu-1}(x)\mathcal{M}_{\nu+1}(x) \leq 0,\]
which by \eqref{defm} is equivalent to the right-hand side of \eqref{ineqturan}. For the left-hand side \eqref{ineqturan} observe
that the Tur\'anian
   \[ {}_{\ms}\Delta_{\nu}(x) = \left[\ms_{\nu}(x)\right]^2-\ms_{\nu-1}(x)\ms_{\nu+1}(x)\]
can be rewritten as
   \begin{equation}\label{delta}
      {}_{\ms}\Delta_{\nu}(x) = {}_{I}\Delta_{\nu}(x)+{}_{\ls}\Delta_{\nu}(x)+{}_{I,\ls}\Delta_{\nu}(x),
   \end{equation}
where
   \[ {}_{I}\Delta_{\nu}(x) = \left[I_{\nu}(x)\right]^2-I_{\nu-1}(x)I_{\nu+1}(x),\]
   \[ {}_{\ls}\Delta_{\nu}(x) = \left[\ls_{\nu}(x)\right]^2-\ls_{\nu-1}(x)\ls_{\nu+1}(x)\]
and
   \[ {}_{I,\ls}\Delta_{\nu}(x) = I_{\nu+1}(x)\ls_{\nu-1}(x) + I_{\nu-1}(x)\ls_{\nu+1}(x) - 2I_{\nu}(x)\ls_{\nu}(x).\]
It is well-known (see \cite{bariczbams, bp, joshi, thiru}) that ${}_I\Delta_{\nu}(x)>0$ for all $\nu>-1$ and $x>0,$
and ${}_{\ls}\Delta_{\nu}(x)>0$ for all $\nu>-\frac{3}{2}$ and $x>0.$ On the other hand, by using the integral representations
   \[ I_{\nu}(x) = \frac{2\left( \frac 12 x\right)^\nu}{\sqrt{\pi} \Gamma\left(\nu+\frac12\right)}
                   \int_0^1 (1-t^2)^{\nu-\frac12} \cosh(xt)\dt, \]
   \[ \ls_{\nu}(x) = \frac{2\left( \frac 12 x\right)^\nu}{\sqrt{\pi} \Gamma\left(\nu+\frac12\right)}
                     \int_0^1 (1-t^2)^{\nu-\frac12} \sinh(xt)\dt, \]
where $\nu>-\frac{1}{2},$ and the relation $\Gamma\left(\nu+\frac32\right)\Gamma\left(\nu-\frac12\right) = \Gamma^2\left(\nu+\frac12\right)$ we obtain for all $\nu>\frac{1}{2}$ and $x>0$ that
   \begin{align*}
      {}_{I,\ls}\Delta_{\nu}(x) &=\frac{4\left(\frac 12 x\right)^{2\nu}}
                 {\pi\Gamma\left(\nu+\frac32\right)\Gamma\left(\nu - \frac12\right)}\left[
                 \int_0^1\int_0^1(1-t^2)^{\nu+\frac12}(1-s^2)^{\nu-\frac32} \cosh(xt) \sinh(xs)\dt\ds \right. \\
       &\qquad + \left. \int_0^1\int_0^1(1-t^2)^{\nu-\frac32}(1-s^2)^{\nu+\frac12} \cosh(xt) \sinh(xs)\dt\ds \right] \\
       &\qquad - \frac{8\left(\frac 12 x\right)^{2\nu}}{\pi \Gamma^2\left(\nu+\frac12\right)} \int_0^1\int_0^1
                 (1-t^2)^{\nu-\frac12}(1-s^2)^{\nu-\frac12} \cosh(xt) \sinh(xs)\dt\ds \\
       &= \frac{4\left(\frac 12 x\right)^{2\nu}}{\pi \Gamma^2\left(\nu+\frac12\right)}\int_0^1\int_0^1(1-t^2)^{\nu-\frac32}
                 (1-s^2)^{\nu-\frac32}\left[(1-t^2)^2 + (1-s^2)^2 \right.\\
       &\qquad \left. - 2(1-t^2)(1-s^2)\right] \cosh(xt) \sinh(xs)\dt\ds\\
       &= \frac{4\left(\frac 12 x\right)^{2\nu}}{\pi\Gamma^2\left(\nu+\frac12\right)} \int_0^1\int_0^1(1-t^2)^{\nu-\frac32}
                 (1-s^2)^{\nu-\frac32}(t^2-s^2)^2 \cosh(xt) \sinh(xs)\dt\ds .
   \end{align*}
This shows that ${}_{I,\ls}\Delta_{\nu}(x)>0$ for all $\nu>\frac{1}{2}$ and $x>0,$ and consequently by \eqref{delta} is    ${}_{\ms}\Delta_{\nu}(x)>0$ for all $\nu>\frac{1}{2}$ and $x>0$.

Next we prove inequalities \eqref{quot1} and \eqref{quot2}. Since for $\nu>-\frac{1}{2}$ the function $\mathcal{M}_{\nu}$ is completely monotonic on $(0,\infty),$ it follows that it is decreasing on $(0,\infty)$ for $\nu>-\frac{1}{2}.$ Then, by \eqref{defm}
the function $x\mapsto \log\left(-x^{-\nu}\ms_{\nu}(x)\right)$ is also decreasing on $(0,\infty)$ for $\nu>-\frac{1}{2},$ which in turn implies inequality \eqref{quot1}. Now, we show that inequalities \eqref{ineqturan} and \eqref{quot1} imply inequality \eqref{quot2}. For this first observe that if we use the recurrence relations (see \cite[p. 251]{nist} and \cite[p. 292]{nist}) for the functions $\ls_{\nu}$ and $I_{\nu}$ and \eqref{L1} it can be shown that the function $\ms_{\nu}$ satisfies the same recurrence relations as $\ls_{\nu},$ that is,
   \begin{equation} \label{rec0}
      \ms_{\nu-1}(x)-\ms_{\nu+1}(x) = \frac{2\nu}{x}\ms_{\nu}(x)
                                    + \frac{\left(\frac{x}{2}\right)^{\nu}}{\sqrt{\pi}\,\Gamma\left(\nu+\frac{3}{2}\right)},
   \end{equation}
   \begin{equation} \label{L2}
      \ms_{\nu-1}(x) + \ms_{\nu+1}(x) =
                       2\ms_{\nu}'(x)-\frac{\left(\frac{x}{2}\right)^{\nu}}{\sqrt{\pi}\,\Gamma\left(\nu+\frac{3}{2}\right)},
   \end{equation}
   \begin{equation} \label{rec1}
      x\ms_{\nu}'(x)+\nu\ms_{\nu}(x)=x\ms_{\nu-1}(x).
   \end{equation}
Subtracting the recurrence relations \eqref{rec0} and \eqref{L2} we obtain
   \begin{equation}\label{rec2}
      \ms_{\nu+1}(x) = \ms_{\nu}'(x)-\frac{\nu}{x}\ms_{\nu}(x) - \frac{\left(\frac{x}{2}\right)^{\nu}}
                       {\sqrt{\pi}\, \Gamma\left(\nu+\frac{3}{2}\right)}.
   \end{equation}
From \eqref{rec1} and \eqref{rec2} it follows that
   \begin{equation} \label{L2.5}
      {}_{\ms}\Delta_{\nu}(x) = \left(1+\frac{\nu^2}{x^2}\right)\left[\ms_{\nu}(x)\right]^2 - \left[\ms_{\nu}'(x)\right]^2
                              + \frac{x^{\nu}\ms_{\nu-1}(x)}{\sqrt{\pi}2^{\nu}\Gamma\left(\nu+\frac{3}{2}\right)}.
   \end{equation}
But, according to the left-hand side of \eqref{ineqturan} we have ${}_{\ms}\Delta_{\nu}(x)>0$ for $x>0$ and $\nu>\frac{1}{2},$ and consequently
   \[ \left(1+\frac{\nu^2}{x^2}\right)\left[\ms_{\nu}(x)\right]^2-\left[\ms_{\nu}'(x)\right]^2 >
         - \frac{x^{\nu}\ms_{\nu-1}(x)}{\sqrt{\pi}2^{\nu}\Gamma\left(\nu+\frac{3}{2}\right)}>0.\]
Therefore, for $x>0$ and $\nu>\frac{1}{2}$ we have
   \[ \left(\frac{x\ms_{\nu}'(x)}{\ms_{\nu}(x)} - \sqrt{x^2+\nu^2}\right)\left(\frac{x\ms_{\nu}'(x)}
                {\ms_{\nu}(x)}+\sqrt{x^2+\nu^2}\right)<0.\]
Inequality \eqref{quot1} implies the right-hand side of \eqref{quot2}, while the above inequality imply the left-hand side of \eqref{quot2}.
\end{proof}

\section{\bf Further results}
\setcounter{equation}{0}

In this section we give a set of other functional inequalities for the function $x \mapsto \mathcal M_\nu(x)$.

\begin{theorem} \label{th2}
The following inequalities hold true:
   \begin{enumerate}
      \item[\bf a.] For all $x,y>0$ and $\nu>-\frac{1}{2}$ we have
         \begin{equation} \label{FX1}
            \mathcal{M}_{\nu}(x+y) \geq \frac{\Gamma(\nu+1)}{\Gamma\left(\nu+\frac{1}{2}\right)}
                                        \, \mathcal{M}_{\nu}(x)\mathcal{M}_{\nu}(y)\, .
         \end{equation}
      \item[\bf b.] For all $\nu\geq \frac{1}{2}$ and $x>0$ it is
         \begin{equation} \label{bound1}
            \mathcal{M}_{\nu}(x)\geq\frac{\Gamma\left(\nu+\frac{1}{2}\right)}{\Gamma(\nu+1)} \cdot \frac{1-e^{-x}}{x}\, .
         \end{equation}
        Moreover, the above inequality is reversed when $|\nu|<\frac{1}{2}$ and $x>0.$
      \item[\bf c.] For all $\nu\geq \frac{3}{2}$ and $x>0,$ we have
         \begin{equation} \label{FX2}
            \mathcal{M}_{\nu-1}(x)\mathcal{M}_{\nu+1}(x) \leq \mathcal{M}_{\frac{1}{2}}(x)\mathcal{M}_{2\nu-\frac{1}{2}}(x)\, ,
         \end{equation}
        which is reversed when $\nu\in\left(\frac{1}{2},\frac{3}{2}\right)$.
      \item[\bf d.] For all for all $\nu>-1$ and $x>0$, we have
         \begin{equation} \label{FX3}
            \mathcal{M}_{\nu}(x) < \frac{\Gamma\left(\nu+\frac{1}{2}\right)}{\Gamma(\nu+1)}e^{\frac{x^2}{4(\nu+1)}}
                         - \frac{4}{\sqrt{\pi}(2\nu+1)}\sinh \frac{x}{2\nu+3}\, .
         \end{equation}
   \end{enumerate}
\end{theorem}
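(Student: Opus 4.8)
The plan is to dispatch parts (a)--(c) from the log-convexity assertions of Theorem~\ref{th1} combined with standard properties of convex functions, and to treat part (d) by a direct power-series estimate.

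For (a), note first that \eqref{integr} shows $\mathcal M_\nu$ extends continuously to $x=0$ with $\mathcal M_\nu(0)=\frac2{\sqrt\pi}\int_0^1(1-t^2)^{\nu-\frac12}\,\dt=\frac{\Gamma(\nu+\frac12)}{\Gamma(\nu+1)}$, so by Theorem~\ref{th1}(a) the function $\varphi:=\log\mathcal M_\nu$ is convex on $[0,\infty)$. For a convex function the increment over an interval of fixed length is nondecreasing under rightward translation of the interval; applying this to $[0,y]$ and $[x,x+y]$ gives $\varphi(x+y)-\varphi(x)\ge\varphi(y)-\varphi(0)$, that is $\varphi(x+y)+\varphi(0)\ge\varphi(x)+\varphi(y)$. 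Exponentiating and inserting $e^{\varphi(0)}=\Gamma(\nu+\frac12)/\Gamma(\nu+1)$ yields \eqref{FX1}.

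For (b), since $\mathcal M_\nu(0)=\Gamma(\nu+\frac12)/\Gamma(\nu+1)$ and $\frac{1-e^{-x}}{x}=\int_0^1 e^{-xt}\,\dt$, inequality \eqref{bound1} is equivalent to
\[ \int_0^1(1-t^2)^{\nu-\frac12}e^{-xt}\,\dt\cdot\int_0^1\dt\;\ge\;\int_0^1(1-t^2)^{\nu-\frac12}\,\dt\cdot\int_0^1 e^{-xt}\,\dt, \]
which is Chebyshev's integral inequality applied to the functions $t\mapsto(1-t^2)^{\nu-\frac12}$ and $t\mapsto e^{-xt}$ on $[0,1]$. The second is decreasing; the first is decreasing for $\nu\ge\frac12$ (same monotonicity, giving \eqref{bound1}) and increasing for $|\nu|<\frac12$ (opposite monotonicity, reversing it). For (c), Theorem~\ref{th1}(b) says $\nu\mapsto\log\mathcal M_\nu(x)$ is convex on $(-\frac12,\infty)$. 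The pairs $\{\nu-1,\nu+1\}$ and $\{\frac12,2\nu-\frac12\}$ have the same sum $2\nu$; when $\nu\ge\frac32$ one has $\frac12\le\nu-1<\nu+1\le2\nu-\frac12$, so $\nu-1$ and $\nu+1$ are convex combinations of $\frac12$ and $2\nu-\frac12$, with weights $\tfrac{\nu+1/2}{2\nu-1}$ and its complement. Applying convexity to each and adding gives $\log\mathcal M_{\nu-1}(x)+\log\mathcal M_{\nu+1}(x)\le\log\mathcal M_{1/2}(x)+\log\mathcal M_{2\nu-1/2}(x)$, which is \eqref{FX2}; for $\frac12<\nu<\frac32$ the inclusion reverses, $\frac12$ and $2\nu-\frac12$ become convex combinations of $\nu-1$ and $\nu+1$, and the same computation gives the reversed inequality.

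Part (d) is where I expect the real work. Starting from \eqref{L1} and \eqref{defm} one gets
\[ \mathcal M_\nu(x)=\Gamma\!\left(\nu+\tfrac12\right)\!\left[\sum_{n\ge0}\frac{(x/2)^{2n}}{n!\,\Gamma(n+\nu+1)}-\sum_{n\ge0}\frac{(x/2)^{2n+1}}{\Gamma(n+\frac32)\,\Gamma(n+\nu+\frac32)}\right]. \]
For the even series, $\Gamma(n+\nu+1)=\Gamma(\nu+1)(\nu+1)(\nu+2)\cdots(\nu+n)\ge\Gamma(\nu+1)(\nu+1)^n$ (valid as $\nu+1>0$), so it is bounded above by $\frac1{\Gamma(\nu+1)}\sum_{n\ge0}\frac1{n!}\big(\tfrac{x^2}{4(\nu+1)}\big)^n=\frac1{\Gamma(\nu+1)}e^{x^2/(4(\nu+1))}$, producing the first term of \eqref{FX3}. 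For the odd series I would substitute $\Gamma(n+\frac32)=\frac{(2n+1)!\sqrt\pi}{2^{2n+1}n!}$, compare its $n$-th term with the $n$-th Maclaurin coefficient of $\sinh\frac{x}{2\nu+3}$, and reduce matters to the inequality $\prod_{k=1}^n\frac{k}{\nu+k+\frac12}\ge\frac{2}{(2\nu+3)^{2n+1}}$, which I would prove by induction on $n$ (the inductive step amounting to $(n+1)(2\nu+3)^2\ge\nu+n+\frac32$, with the base case $n=0$ being exactly $2\nu+3\ge2$). Using $(\nu+\frac12)\Gamma(\nu+\frac12)=\Gamma(\nu+\frac32)$, this yields the lower bound $\frac{4}{\sqrt\pi(2\nu+1)}\sinh\frac{x}{2\nu+3}$ for $\Gamma(\nu+\frac12)$ times the odd series, and combining the two halves gives \eqref{FX3} (strictness coming already from the $n=1$ term of the even part). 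The main obstacle is that both the inductive $\sinh$-estimate and the direction in which multiplication by $\Gamma(\nu+\frac12)$ acts are transparent only for $\nu>-\frac12$; pushing the argument down to $-1<\nu\le-\frac12$, where $\Gamma(\nu+\frac12)<0$ reverses several of the inequalities and the naive termwise comparison of the odd series breaks down (already at $n=1$), would force a joint rather than separate treatment of the two series, and this is the point I would expect to require real care.
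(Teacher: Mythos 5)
Your proposal is correct on the range where the paper's own argument actually works, but it reaches parts \textbf{a}, \textbf{c} and \textbf{d} by genuinely different routes. For \textbf{a} the paper invokes Kimberling's theorem \cite{kimber} (a continuous completely monotonic function mapping $(0,\infty)$ into $(0,1)$ has super-additive logarithm), which requires the normalization via \eqref{bound0}; you instead derive the super-additivity $\varphi(x+y)+\varphi(0)\ge\varphi(x)+\varphi(y)$ for $\varphi=\log\mathcal M_\nu$ directly from log-convexity on $[0,\infty)$ together with $\mathcal M_\nu(0)=\Gamma(\nu+\tfrac12)/\Gamma(\nu+1)$ --- this is more self-contained and does not even need the upper bound $<1$. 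Part \textbf{b} is identical to the paper (Chebyshev with weight $p\equiv1$). For \textbf{c} the paper applies the Chebyshev inequality \eqref{csebisev} a second time, with weight $p(t)=e^{-xt}$ and $f(t)=(1-t^2)^{\nu-3/2}$, $g(t)=(1-t^2)^{\nu+1/2}$, reading off the four integrals as $\mathcal M_{\nu\mp1}$, $\mathcal M_{1/2}$, $\mathcal M_{2\nu-1/2}$; your argument via log-convexity in $\nu$ (Theorem \ref{th1}\textbf{b}) and the majorization of $\{\nu-1,\nu+1\}$ by $\{\tfrac12,2\nu-\tfrac12\}$ is equally valid, reuses machinery already established, and makes the threshold $\nu=\tfrac32$ transparent as the point where the inclusion of index intervals flips. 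For \textbf{d} the paper simply imports the two bounds $I_\nu(x)<\frac{x^\nu}{2^\nu\Gamma(\nu+1)}e^{x^2/(4(\nu+1))}$ and $\ls_\nu(x)>\frac{x^\nu}{\sqrt{\pi}2^{\nu-1}\Gamma(\nu+\frac32)}\sinh\frac{x}{2\nu+3}$ from \cite{bariczedin,bp} and subtracts; your series computation is precisely a proof of these two bounds, so the two arguments coincide in substance. Your closing worry is well founded: the final step --- multiplying the lower bound for $\ms_\nu=\ls_\nu-I_\nu$ by $-2^\nu\Gamma(\nu+\tfrac12)x^{-\nu}$ --- reverses the inequality into \eqref{FX3} only when $\Gamma(\nu+\tfrac12)>0$, i.e.\ $\nu>-\tfrac12$; the paper's proof silently has the same restriction (and at $\nu=-\tfrac12$ the definition \eqref{defm} itself degenerates), so you should not regard your inability to cover $-1<\nu\le-\tfrac12$ as a defect of your approach relative to the paper's. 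One trivial slip: strictness in the even-series estimate first appears at $n=2$ (where $(\nu+1)(\nu+2)>(\nu+1)^2$), not at $n=1$.
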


\begin{proof}[\bf Proof.]
{\bf a.} From \eqref{bound0} and part {\bf a} of Theorem 1 it is clear that the function $x \mapsto \frac{\Gamma(\nu+1)}{\Gamma\left(\nu+\frac{1}{2}\right)}\mathcal{M}_{\nu}(x)$ maps $(0,\infty)$ into $(0,1)$ and it is completely monotonic on $(0,\infty)$ for all $\nu>-\frac{1}{2}.$ On the other hand, according to Kimberling \cite{kimber} if a function $f,$ defined on $(0,\infty),$ is continuous and completely monotonic and maps $(0,\infty)$ into $(0,1),$ then $\log f$ is super-additive, that is for all $x,y > 0$ we have
   \[ \log f(x + y) \geq \log f(x) + \log f(y)\quad \mbox{or}\quad  f(x + y)\geq f(x)f(y).\]
Therefore we conclude the asserted inequality \eqref{FX1}.

{\bf b.} We point out that \eqref{bound1} complements and improves inequality \eqref{bound0}. Moreover, because
\eqref{bound1} inequality is reversed when $|\nu|<\frac{1}{2}$ and $x>0,$ and since  $e^{-x}>1-x,$ the reversed form of inequality \eqref{bound1} is better than \eqref{bound0} for $|\nu|<\frac{1}{2}$ and $x>0.$ Now, recall the Chebyshev integral inequality \cite[p. 40]{mitri}: {\it If $f,g:[a,b]\rightarrow\mathbb{R}$ are synchoronous (both increase or decrease) integrable functions, and $p:[a,b]\rightarrow\mathbb{R}$ is a positive integrable function, then
   \begin{equation}\label{csebisev}
      \int_a^bp(t)f(t)\dt \int_a^bp(t)g(t)\dt \leq \int_a^bp(t)\dt \int_a^bp(t)f(t)g(t)\dt.
   \end{equation}
Note that if $f$ and $g$ are asynchronous (one is decreasing and the other is increasing), then \eqref{csebisev} is reversed}. Now, we shall use \eqref{csebisev} and \eqref{integr} to prove \eqref{bound1}. For this consider the functions $p,f,g:[0,1]\to\mathbb{R},$ defined by $$p(t)=1, \ f(t)=\frac{2}{\sqrt{\pi}}(1-t^2)^{\nu-\frac{1}{2}}\ \ \mbox{and} \ \ g(t)=e^{-xt}.$$ Observe that $g$ is decreasing and $f$ is increasing (decreasing) if $-\frac{1}{2}<\nu\leq\frac{1}{2}$ ($\nu\geq\frac{1}{2}$). On the other hand, we have
   \[ \mathcal{M}_{\nu}(0) = \frac2{\sqrt{\pi}} \int_0^1\left(1-t^2\right)^{\nu-\frac{1}{2}}\dt
                           = \frac{\Gamma\left(\nu+\frac{1}{2}\right)}{\Gamma(\nu+1)}\quad \mbox{and}\quad
                             \int_0^1e^{-xt}\dt=\frac{1-e^{-x}}{x},\]
and by the Chebyshev inequality \eqref{csebisev} we get inequality \eqref{bound1} when $\nu\geq\frac{1}{2},$ and its reverse when $|\nu|<\frac{1}{2}.$

{\bf c.} Another use of the Chebyshev integral inequality \eqref{csebisev}, that is $p,f,g:[0,1] \to \mathbb{R},$ defined by
   \[ p(t) = e^{-xt},\quad f(t) = \frac2{\sqrt{\pi}}\,(1-t^2)^{\nu-\frac{3}{2}}\quad \mbox{and}\quad
       g(t)= \frac2{\sqrt{\pi}}\,(1-t^2)^{\nu+\frac{1}{2}},\]
taking into account (see \cite[p. 254]{nist} and \cite[p. 291]{nist})
   \[ \ms_{\frac{1}{2}}(x) = \ls_{\frac{1}{2}}(x) - I_{\frac{1}{2}}(x) = \sqrt{\frac2{\pi x}}\,(\cosh x -1)
                           - \sqrt{\frac2{\pi x}}\,\sinh x  = \sqrt{\frac2{\pi x}}\,(e^{-x}-1),\]
by \eqref{defm} results in \eqref{FX2} for $\nu\geq \frac{3}{2}$ and $x>0.$ In turn, the above inequality is reversed when $\nu\in\left(\frac{1}{2},\frac{3}{2}\right)$ and $x>0.$

{\bf d.} If we combine the inequalities \cite{bariczedin,bp}
   \[ I_{\nu}(x)   < \frac{x^{\nu}}{2^{\nu}\Gamma(\nu+1)}e^{\frac{x^2}{4(\nu+1)}}\quad \mbox{and}\quad
      \ls_{\nu}(x) > \frac{x^{\nu}\sinh \frac{x}{2\nu+3}}{\sqrt{\pi}2^{\nu-1}\Gamma\left(\nu+\frac{3}{2}\right)},\]
which hold for all $\nu>-1$ and $x>0$ by \eqref{L1} we obtain
   \[ \ms_{\nu}(x) > \frac{x^{\nu}\sinh \frac{x}{2\nu+3}}{\sqrt{\pi}2^{\nu-1}\Gamma\left(\nu+\frac{3}{2}\right)} -
                           \frac{x^{\nu}}{2^{\nu}\Gamma(\nu+1)}e^{\frac{x^2}{4(\nu+1)}}, \]
consequently \eqref{FX3} as well.
\end{proof}

\begin{remark} {\em By \eqref{defm} and \eqref{FX2} immediately follows the inequality
   \[ \ms_{\nu-1}(x)\ms_{\nu+1}(x) \leq \frac{\sqrt{2}\, \Gamma(2\nu)(e^{-x}-1)}
                   {\sqrt{\pi x}\,\Gamma\left(\nu-\frac{1}{2}\right)\Gamma\left(\nu+\frac{3}{2}\right)}
                   \ms_{2\nu-\frac{1}{2}}(x); \]
the validity range is $\nu\geq \frac{3}{2}$ and $x>0,$ while the inequality is reversed when
$\nu\in\left(\frac{1}{2},\frac{3}{2}\right)$ and $x>0.$}
\end{remark}

Next we derive inequalities similar to \eqref{quot2} when $\nu\in\left[-\frac{1}{2},0\right].$ We note that the left-hand side of \eqref{quot3} is weaker than the left-hand side of \eqref{quot2}, however, the right-hand side of \eqref{quot3} is better than the right-hand side of \eqref{quot2}

\begin{theorem} \label{th3}
For all $x>0$ and $\nu\in\left[-\frac{1}{2},0\right]$ we have
   \begin{equation} \label{quot3}
      \frac{-1-\sqrt{1+4(x^2+\nu^2)}}{2} < \frac{x\ms_{\nu}'(x)}{\ms_{\nu}(x)} < \frac{-1+\sqrt{1+4(x^2+\nu^2)}}{2}\, .
   \end{equation}
Moreover, for $x>0$ and $\nu>\frac{1}{2}$ we have
   \begin{equation} \label{FX31}
      \left[\frac{x\ms_{\nu}'(x)}{\ms_{\nu}(x)}\right]'<\frac{x}{\nu+\frac{1}{2}}.
   \end{equation}
\end{theorem}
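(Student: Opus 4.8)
The plan is to deduce both inequalities from the modified Struve differential equation \eqref{eqStruve}, fed with the monotonicity and Tur\'an type information already obtained in Theorem \ref{th1}. Throughout put $q_{\nu}(x)=\frac{x\ms_{\nu}'(x)}{\ms_{\nu}(x)}$ and note that $\ms_{\nu}(x)<0$ for every $\nu>-\frac{1}{2}$, since $\mathcal{M}_{\nu}>0$ by part {\bf a} of Theorem \ref{th1} and by \eqref{defm}. Hence, for $\nu>-\frac{1}{2}$ the quantity
\[
   \sigma_{\nu}(x):=\frac{x^{\nu+1}}{\sqrt{\pi}\,2^{\nu-1}\Gamma\!\left(\nu+\frac{1}{2}\right)\ms_{\nu}(x)}
\]
is negative, while $\sigma_{-1/2}\equiv0$ because the source term of \eqref{eqStruve} vanishes there. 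Substituting $\ms_{\nu}'=\frac{q_{\nu}}{x}\ms_{\nu}$ and the resulting formula for $\ms_{\nu}''$ into \eqref{eqStruve} yields the common starting point, a Riccati-type identity
\[
   x\,q_{\nu}'(x)+q_{\nu}^{2}(x)-x^{2}-\nu^{2}=\sigma_{\nu}(x).
\]

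For \eqref{quot3} I would fix $\nu\in\left(-\frac{1}{2},0\right]$ and invoke part {\bf c} of Theorem \ref{th1}: the function $x\mapsto-\ms_{\nu}(x)$ is positive and log-convex on $(0,\infty)$, so $\bigl(\log(-\ms_{\nu})\bigr)''\ge0$, whence
\[
   q_{\nu}'(x)=\bigl(\log(-\ms_{\nu}(x))\bigr)'+x\bigl(\log(-\ms_{\nu}(x))\bigr)''\ge\frac{q_{\nu}(x)}{x},
\]
that is $x\,q_{\nu}'(x)\ge q_{\nu}(x)$. Plugging this into the Riccati identity and using $\sigma_{\nu}(x)<0$ gives
\[
   q_{\nu}^{2}(x)+q_{\nu}(x)-\left(x^{2}+\nu^{2}\right)\le x\,q_{\nu}'(x)+q_{\nu}^{2}(x)-\left(x^{2}+\nu^{2}\right)=\sigma_{\nu}(x)<0,
\]
so $q_{\nu}(x)$ lies strictly between the two roots $\frac{1}{2}\bigl(-1\pm\sqrt{1+4(x^{2}+\nu^{2})}\bigr)$ of $t\mapsto t^{2}+t-(x^{2}+\nu^{2})$, which is exactly \eqref{quot3}. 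The boundary value $\nu=-\frac{1}{2}$ I would handle directly: from $\ms_{-1/2}(x)=-\sqrt{\frac{2}{\pi x}}\,e^{-x}$ one gets $q_{-1/2}(x)=-\frac{1}{2}-x$, and then $\frac{1}{2}\bigl(-1-\sqrt{2+4x^{2}}\bigr)<-\frac{1}{2}-x<\frac{1}{2}\bigl(-1+\sqrt{2+4x^{2}}\bigr)$ is elementary.

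For \eqref{FX31} I would fix $\nu>\frac{1}{2}$ and differentiate once more. The Riccati identity gives $q_{\nu}'(x)=x+\frac{\nu^{2}-q_{\nu}^{2}(x)+\sigma_{\nu}(x)}{x}$. In parallel I would recast the Tur\'anian: writing \eqref{rec1} as $\ms_{\nu-1}=\frac{q_{\nu}+\nu}{x}\ms_{\nu}$ and using $\Gamma\!\left(\nu+\frac{3}{2}\right)=\left(\nu+\frac{1}{2}\right)\Gamma\!\left(\nu+\frac{1}{2}\right)$ together with \eqref{L2.5}, one obtains
\[
   {}_{\ms}\Delta_{\nu}(x)=\bigl[\ms_{\nu}(x)\bigr]^{2}\left(1+\frac{\nu^{2}-q_{\nu}^{2}(x)}{x^{2}}+\frac{\bigl(q_{\nu}(x)+\nu\bigr)\sigma_{\nu}(x)}{(2\nu+1)x^{2}}\right).
\]
Solving this for $\frac{\nu^{2}-q_{\nu}^{2}(x)}{x}$ and substituting into the formula for $q_{\nu}'$ makes everything collapse to
\[
   q_{\nu}'(x)=x\,\frac{{}_{\ms}\Delta_{\nu}(x)}{\bigl[\ms_{\nu}(x)\bigr]^{2}}+\frac{\sigma_{\nu}(x)\bigl(\nu+1-q_{\nu}(x)\bigr)}{(2\nu+1)x}.
\]
Since $\sigma_{\nu}(x)<0$ and, by \eqref{quot1}, $\nu+1-q_{\nu}(x)>1>0$, the second summand is negative, so $q_{\nu}'(x)<x\,{}_{\ms}\Delta_{\nu}(x)\big/\bigl[\ms_{\nu}(x)\bigr]^{2}$; the right-hand side of the Tur\'an type inequality \eqref{ineqturan}, namely ${}_{\ms}\Delta_{\nu}(x)<\bigl[\ms_{\nu}(x)\bigr]^{2}\big/\left(\nu+\frac{1}{2}\right)$, then yields \eqref{FX31}.

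The step I expect to be the main obstacle is the recasting of the Tur\'anian: passing from \eqref{L2.5} to the closed form above for ${}_{\ms}\Delta_{\nu}$ in terms of $q_{\nu}$ and $\sigma_{\nu}$ requires tracking the powers of $x$ and the $\Gamma$-factors precisely (through $\Gamma(\nu+\frac{3}{2})=(\nu+\frac{1}{2})\Gamma(\nu+\frac{1}{2})$ and the definition of $\sigma_{\nu}$), and a single slip in signs or exponents there would spoil the cancellation that powers the final estimate. Everything else is a routine differentiation together with one-line appeals to Theorem \ref{th1}.
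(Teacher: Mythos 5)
Your proposal is correct and follows essentially the same route as the paper: for \eqref{quot3} you combine the log-convexity of $-\ms_{\nu}$ from Theorem \ref{th1}\,{\bf c} with the modified Struve equation to trap $q_{\nu}$ between the roots of $t^2+t-(x^2+\nu^2)$, and for \eqref{FX31} you express $q_{\nu}'$ through the Tur\'anian via \eqref{L2.5} and \eqref{rec1}, discard a term whose sign is controlled by \eqref{quot1} and $\ms_{\nu}<0$, and finish with the right-hand side of \eqref{ineqturan} — exactly the paper's argument, merely repackaged through the Riccati identity (and with a welcome explicit check at $\nu=-\tfrac12$). All the sign and $\Gamma$-factor bookkeeping you flagged as the delicate step checks out.
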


\begin{proof}[\bf Proof.]
By {\bf c} of Theorem 1 we have
   \[ \ms_{\nu}''(x)\ms_{\nu}(x) - \left[\ms_{\nu}'(x)\right]^2 > 0 \]
for all $x>0$ and $\nu\in\left[-\frac{1}{2},0\right].$ On the other hand, recall that the modified Struve function $\ms_{\nu}$ is a particular solution of the modified Struve equation \eqref{eqStruve} and consequently
   \begin{equation} \label{L4}
      \ms_{\nu}''(x) = \left(1+\frac{\nu^2}{x^2}\right)\ms_{\nu}(x) -
                       \frac{1}{x}\ms_{\nu}'(x)+\frac{x^{\nu-1}}{\sqrt{\pi}2^{\nu-1}\Gamma\left(\nu+\frac{1}{2}\right)}.
   \end{equation}
Combining this equation with the above inequality we get
   \[ \left(1 + \frac{\nu^2}{x^2}\right)\left[\ms_{\nu}(x)\right]^2 - \frac{1}{x}\ms_{\nu}(x)\ms_{\nu}'(x)
              - \left[\ms_{\nu}'(x)\right]^2>0,\]
that is,
   \[ \left[\frac{x\ms_{\nu}'(x)}{\ms_{\nu}(x)}\right]^2+\frac{x\ms_{\nu}'(x)}{\ms_{\nu}(x)}-(x^2+\nu^2)<0.\]
Here we used the fact that $\ms_{\nu}(x)<0$ for $x>0$ and $\nu\geq-\frac{1}{2}.$ From the above inequality we deduce \eqref{quot3},
for all $x>0$ and $\nu\in\left[-\frac{1}{2},0\right].$ Moreover, since $\ms_{\nu}'(x)>0$ for $x>0$ and $\nu\in\left[-\frac{1}{2},0\right],$ the expression
$x\ms_{\nu}'(x)\,\big[ \ms_{\nu}(x) \big]^{-1}$ is negative, which implies the right-hand side of \eqref{quot3}. \medskip

It remains to prove \eqref{FX31}. By using \eqref{rec1} and \eqref{L2.5} we have
   \[ \frac{1}{x}\left[\ms_{\nu}(x)\right]^2\left[\frac{x\ms_{\nu}'(x)}{\ms_{\nu}(x)}\right]' =
                 \left(1+\frac{\nu^2}{x^2}\right)\left[\ms_{\nu}(x)\right]^2-\left[\ms_{\nu}'(x)\right]^2
               + \frac{\left(\nu+\frac{1}{2}\right)x^{\nu-1}\ms_{\nu}(x)}{\sqrt{\pi}2^{\nu-1}\Gamma\left(\nu+\frac{3}{2}\right)}.\]
Thus, by using \eqref{L2.5}, \eqref{rec1}, \eqref{quot1} and the fact that $\ms_{\nu}(x)<0$ for $x>0$ and $\nu>-\frac{1}{2},$ we have
   \[ {}_{\ms}\Delta_{\nu}(x)-\frac{1}{x}\left[\ms_{\nu}(x)\right]^2\left[\frac{x\ms_{\nu}'(x)}{\ms_{\nu}(x)}\right]' =
              \frac{x^{\nu}\ms_{\nu}(x)}{2^{\nu}\sqrt{\pi}\Gamma\left(\nu+\frac{3}{2}\right)}\left[\frac{\ms_{\nu}'(x)}
              {\ms_{\nu}(x)}-\frac{\nu+1}{x}\right]>0.\]
Combining this with the right-hand side of the Tur\'an type inequality \eqref{ineqturan} we obtain the desired bound. \medskip
\end{proof}

Now, in order to establish a bilateral functional inequality for $\mathcal M_\nu$, we need the Fox-Wright generalized hypergeometric
function ${}_p\Psi_q(\cdot)$, with $p$ numerator and $q$ denominator parameters, defined by
   \begin{equation} \label{W1}
      {}_p\Psi_q \left[\left. \begin{array}{c} (a_1,\alpha_1), \dots, (a_p,\alpha_p) \\
                 (b_1,\beta_1),\dots, (b_q,\beta_q) \end{array} \right| z \right]
               = \sum_{n=0}^\infty \dfrac{\prod_{l=1}^p\Gamma(a_l+\alpha_l n)}
                 {\prod_{j=1}^q\Gamma(b_j+\beta_j n)} \dfrac{z^n}{n!}.
   \end{equation}
Here $z,a_l,\,b_j\in \mathbb{C}$, $\alpha_l,\,\beta_j \in \mathbb{R}$ for $l\in\{1,\dots,p\}$ and $j\in\{1,\dots,q\}$. The series \eqref{W1} converges absolutely and uniformly for all bounded $|z|,\, z \in \mathbb C$ when
   \[ \varepsilon= 1 + \sum_{j=1}^q\beta_j - \sum_{l=1}^p\alpha_l>0\,.\]
We note that the next inequality complements and improve \eqref{bound0}.

\begin{theorem} \label{Th4}
For all $\nu > -\tfrac12$ and $x>0$ we have
   \[ \frac{\Gamma\left(\nu+\frac{1}{2}\right)}{\Gamma(\nu+1)}\,
            e^{-\frac{\Gamma(\nu+1)\,x}{\sqrt{\pi}\,\Gamma\left(\nu+\frac32\right)}}
      \leq  \mathcal{M}_{\nu}(x) \leq \frac{\Gamma\left(\nu+\frac{1}{2}\right)}{\Gamma(\nu+1)}
      -     \frac{1-e^{-x}}{\sqrt{\pi}\,(\nu+\frac12)}.\]
\end{theorem}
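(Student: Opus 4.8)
The plan is to establish the two-sided bound by exploiting the integral representation \eqref{integr} together with the Fox–Wright machinery just introduced. For the upper bound, I would start from
\[
\mathcal{M}_{\nu}(x)=\frac{2}{\sqrt{\pi}}\int_0^1(1-t^2)^{\nu-\frac12}e^{-xt}\,\dt
\]
and use the elementary inequality $e^{-xt}\le 1-xt+\frac{x^2t^2}{2}$ only on part of the range, or more cleanly the bound $e^{-xt}\le 1-(1-e^{-x})t$ for $t\in[0,1]$ (which holds because $s\mapsto e^{-s}$ is convex, so its graph on $[0,x]$ lies below the chord). Integrating term by term against $(1-t^2)^{\nu-\frac12}$ gives
\[
\mathcal{M}_{\nu}(x)\le \frac{2}{\sqrt{\pi}}\int_0^1(1-t^2)^{\nu-\frac12}\,\dt-\frac{2(1-e^{-x})}{\sqrt{\pi}}\int_0^1 t(1-t^2)^{\nu-\frac12}\,\dt,
\]
and the two beta-type integrals evaluate to $\Gamma(\nu+\frac12)/\Gamma(\nu+1)$ (as already computed in the proof of \eqref{bound0}) and $\frac{1}{2\nu+1}$ respectively, which yields exactly the claimed right-hand side. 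This is the easy half.

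For the lower bound I would use log-convexity of $x\mapsto\mathcal{M}_{\nu}(x)$ established in part \textbf{a} of Theorem \ref{th1}. A log-convex function $\varphi$ on $(0,\infty)$ lies above each of its tangent lines in the logarithmic scale, so $\log\mathcal{M}_{\nu}(x)\ge \log\mathcal{M}_{\nu}(0^+)+x\cdot\frac{\mathcal{M}_{\nu}'(0^+)}{\mathcal{M}_{\nu}(0^+)}$, i.e.
\[
\mathcal{M}_{\nu}(x)\ge \mathcal{M}_{\nu}(0^+)\exp\!\left(\frac{\mathcal{M}_{\nu}'(0^+)}{\mathcal{M}_{\nu}(0^+)}\,x\right).
\]
From the integral representation, $\mathcal{M}_{\nu}(0^+)=\Gamma(\nu+\frac12)/\Gamma(\nu+1)$ and
\[
-\mathcal{M}_{\nu}'(0^+)=\frac{2}{\sqrt{\pi}}\int_0^1 t(1-t^2)^{\nu-\frac12}\,\dt=\frac{1}{\sqrt{\pi}\,(\nu+\frac12)},
\]
so the exponent becomes $-\dfrac{\Gamma(\nu+1)}{\sqrt{\pi}\,\Gamma(\nu+\frac32)}\,x$, matching the stated left-hand side exactly (using $\Gamma(\nu+\frac32)=(\nu+\frac12)\Gamma(\nu+\frac12)$). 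The Fox–Wright function introduced before the theorem is presumably used by the authors either to write $\mathcal{M}_{\nu}$ itself as a ${}_1\Psi_2$ and read off the derivative at $0$, or to justify term-by-term differentiation; I would keep it in reserve as the rigorous justification that $\mathcal{M}_{\nu}$ extends analytically to $0$ with the stated derivative.

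The main obstacle, such as it is, will be making the evaluation at the endpoint $x=0$ fully rigorous: one must check that $\mathcal{M}_{\nu}$ and $\mathcal{M}_{\nu}'$ extend continuously to $0$ (this is immediate from dominated convergence applied to \eqref{integr}, since $(1-t^2)^{\nu-\frac12}$ is integrable for $\nu>-\frac12$ and $t(1-t^2)^{\nu-\frac12}$ is as well), and that the tangent-line inequality for log-convex functions is still valid when the base point is an endpoint of the interval — which follows because $\log\mathcal{M}_{\nu}$ is convex on $[0,\infty)$ by continuity, so its one-sided derivative at $0$ exists (possibly $-\infty$, but here it is finite and equals the computed value) and the supporting-line property holds. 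Everything else is a routine beta-integral computation, so I would present the upper bound in one or two lines and devote the bulk of the argument to carefully stating the log-convexity/supporting-line step for the lower bound.
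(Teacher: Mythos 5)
Your proof is correct, but it takes a genuinely different route from the paper. The authors rewrite $\mathcal{M}_{\nu}(x)$ as $\frac{\Gamma(\nu+\frac12)}{\sqrt{\pi}}\,{}_1\Psi_1[\cdot\,|-x]$ and invoke the Pog\'any--Srivastava two-sided bound \eqref{ineqPS} for Fox--Wright functions; the price is that they must verify the hypotheses \eqref{FX4}, which reduce to the gamma-function inequality \eqref{gammaineq}, and proving that inequality (via monotonicity of two auxiliary functions and digamma/trigamma integral representations) occupies most of their proof. You instead work directly with the integral representation \eqref{integr}: the upper bound follows from the secant estimate $e^{-xt}\leq 1-(1-e^{-x})t$ on $[0,1]$ integrated against the positive kernel $(1-t^2)^{\nu-\frac12}$, and the lower bound from the supporting-line property of the convex function $\log\mathcal{M}_{\nu}$ at the endpoint $0$, using $\mathcal{M}_{\nu}(0^+)=\Gamma(\nu+\frac12)/\Gamma(\nu+1)$ and $\mathcal{M}_{\nu}'(0^+)=-\frac{1}{\sqrt{\pi}(\nu+\frac12)}$; both beta integrals check out and the exponent simplifies correctly via $\Gamma(\nu+\frac32)=(\nu+\frac12)\Gamma(\nu+\frac12)$. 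In effect you have re-derived, from scratch and in this special case, exactly the structure of \eqref{ineqPS} (tangent exponential below, chord-type correction above), but without needing the Fox--Wright formalism or the condition \eqref{gammaineq} at all --- a genuine simplification, since positivity of the kernel and convexity of $t\mapsto e^{-xt}$ are all that is used. The endpoint issues you flag (continuity and one-sided differentiability of $\mathcal{M}_{\nu}$ at $0$, and the validity of the supporting line at a boundary point of the interval of convexity) are handled correctly by dominated convergence and the monotonicity of difference quotients of convex functions, so there is no gap.
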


\begin{proof}[\bf Proof]
By \eqref{integr} we have
   \begin{align*}
      \dfrac{\sqrt{\pi}}2\,\mathcal M_\nu(x) &= \int_0^1 (1-t^2)^{\nu-\frac12}\, {e}^{-xt}\, {\rm d}t
                  = \dfrac12\, \sum_{n=0}^\infty \dfrac{(-x)^n}{n!}\, \int_0^1 s^{\frac{n-1}2}(1-s)^{\nu - \frac12}\, {\rm d}s \\
                 &= \dfrac{\Gamma(\nu+\frac12)}2\,\sum_{n=0}^\infty \dfrac{\Gamma\left( \frac{n+1}2\right)}
                    {\Gamma\left(\frac n2+\nu+1\right)}\,\dfrac{(-x)^n}{n!} = \dfrac{\Gamma(\nu+\frac12)}2\,
                    {}_1\Psi_1\left[ \left.\begin{array}{c} (\frac12, \frac12)\\
                    \left(\nu+1, \frac12\right) \end{array} \right| -x\right]\, .
   \end{align*}
Since $\varepsilon = 1$, the series converges for all $x >0$. Therefore, for all $x >0$ we have
   \[ \mathcal M_\nu(x) = \dfrac{\Gamma(\nu+\frac12)}{\sqrt{\pi}}\,
               {}_1\Psi_1\left[ \left.\begin{array}{c} (\frac12, \frac12)\\
               \left(\nu+1, \frac12\right) \end{array} \right| -x\right].\]
On the other hand, recall \cite[Theorem 4]{PS} and \cite[eq. (22)]{PS}, which say that for all ${}_p\Psi_q[\cdot]$ satisfying
   \begin{equation} \label{FX4}
       \psi_1> \psi_2\ \ \  \mbox{and}\ \  \ \psi_1^2 < \psi_2\psi_0,
   \end{equation}
the two--sided inequality
   \begin{equation}\label{ineqPS}
      \psi_0e^{\psi_1\psi_0^{-1}|x|}\leq {}_p\Psi_q \left[\left. \begin{array}{c} (a_1,\alpha_1), \dots, (a_p,\alpha_p) \\
      (b_1,\beta_1),\dots, (b_q,\beta_q) \end{array} \right| x \right]\leq \psi_0-(1-e^{|x|})\psi_1,
   \end{equation}
hold for all $x \in \mathbb R$. Here
   \[ \psi_m = \frac{\prod_{j=1}^p\Gamma(a_j+\alpha_jm)}{\prod_{j=1}^q\Gamma(b_j+\beta_jm)}, \qquad j\in\{0,1,2\}.\]
In our case we have
   \[ \psi_0e^{\psi_1\psi_0^{-1}|x|}=\frac{\sqrt{\pi}}{\Gamma(\nu+1)}e^{\frac{\Gamma(\nu+1)\,|x|}{\sqrt{\pi}\Gamma\left(\nu+\frac32\right)}}
               \quad \mbox{and}\quad \psi_0-(1-e^{|x|})\psi_1=\frac{\sqrt{\pi}}{\Gamma(\nu+1)} - \frac{1-e^{|x|}}{\Gamma\left(\nu+\frac32\right)},\]
and the conditions \eqref{FX4} can be simplified as
\begin{equation}\label{gammaineq} \frac{2}{\sqrt{\pi}}>\frac{\Gamma\left(\nu+\frac32\right)}{\Gamma(\nu+2)} > \sqrt{\frac2{\pi(\nu+1)}}. \end{equation}
In what follows, we show that if $\nu>-\frac{1}{2},$ then \eqref{gammaineq} holds, and consequently, by applying \eqref{ineqPS}, for all $\nu > -\frac{1}{2}$ and $x>0$ we achieve the asserted bilateral inequality.

Consider the functions $f,g:(-1,\infty),$ defined by
$$f(\nu)=\frac{\sqrt{\pi}}{2}\frac{\Gamma\left(\nu+\frac{3}{2}\right)}{\Gamma(\nu+2)}\ \ \mbox{and}\ \ g(\nu)=\sqrt{\frac{\pi}{2}}\sqrt{\nu+1}\cdot\frac{\Gamma\left(\nu+\frac{3}{2}\right)}{\Gamma(\nu+2)}.$$
Since Euler's digamma function $\psi,$ defined by $\psi(x)=\Gamma'(x)/\Gamma(x),$ is increasing on $(0,\infty),$ we obtain that
$$\frac{f'(\nu)}{f(\nu)}=\psi\left(\nu+\frac{3}{2}\right)-\psi(\nu+2)<0$$
for all $\nu>-1,$ and thus $f(\nu)<f\left(-\frac{1}{2}\right)=1$ if $\nu>-\frac{1}{2}.$ This proves the left-hand side of \eqref{gammaineq}. Now, for the right-hand side of \eqref{gammaineq} we consider the function $h:(-1,\infty),$ defined by
$$h(\nu)=\frac{g'(\nu)}{g(\nu)}=\psi\left(\nu+\frac{3}{2}\right)-\psi(\nu+2)+\frac{1}{2(\nu+1)}.$$
By using the formulas \cite[p. 140]{nist}
$$\psi'(x)=\int_0^{\infty}\frac{t}{1-e^{-t}}e^{-xt}\dt\ \ \ \mbox{and}\ \ \ \frac{1}{x^2}=\int_0^{\infty}te^{-xt}\dt,$$
we obtain that
\begin{equation}\label{h'}h'(\nu)=\psi'\left(\nu+\frac{3}{2}\right)-\psi'(\nu+2)-\frac{1}{2(\nu+1)^2}=
\frac{1}{2}\int_0^{\infty}\frac{te^{-(\nu+1)t}}{1-e^{-t}}\left(2e^{-\frac{1}{2}t}-e^{-t}-1\right)\dt<0\end{equation}
for all $\nu>-1.$ We note that by using the series representation \cite[p. 139]{nist}
$$\psi(x+1)=-\gamma+\sum_{n=1}^{\infty}\left(\frac{1}{n}-\frac{1}{n+x}\right),$$
where $\gamma$ is the Euler constant, it follows that
$$2h(\nu)=\frac{1}{\nu+1}-\sum_{n=1}^{\infty}\frac{1}{(n+\nu+1)(n+\nu+\frac{1}{2})},$$
which shows that $h(\nu)\to0,$ as $\nu\to\infty.$ Consequently, $h(\nu)>0$ if $\nu\in(-1,\infty).$ Thus, the function $g$ is increasing, and $g(\nu)>g\left(-\frac{1}{2}\right)=1$ if $\nu>-\frac{1}{2}.$
\end{proof}

\begin{remark}
{\em We mention that actually the right-hand side of \eqref{gammaineq} can be rewritten as the Tur\'an type inequality
$$\frac{2}{\pi}<\frac{\Gamma^2\left(\nu+\frac{3}{2}\right)}{\Gamma(\nu+1)\Gamma(\nu+2)}.$$ Moreover, we note that by using the recurrence relation $\Gamma(x+1)=x\Gamma(x),$ the inequality \eqref{gammaineq} can be rewritten in the form
$$\frac{2}{\sqrt{\pi}}\frac{\nu+1}{\nu+\frac{1}{2}}>\frac{\Gamma\left(\nu+\frac{1}{2}\right)}{\Gamma(\nu+1)}>
\sqrt{\frac{2}{\pi}}\frac{\sqrt{\nu+1}}{\nu+\frac{1}{2}},$$
which is valid for $\nu>-\frac{1}{2}.$ As far as we know the above inequality is new. Lower and upper bounds for the quotient $\Gamma\left(x+\frac{1}{2}\right)/\Gamma(x+1)$ have been established by many authors, we refer to the survey paper \cite{qi} for more details.

It is also important to note that according to \eqref{h'} the function $h$ is actually completely monotonic. }
\end{remark}

\subsection*{Acknowledgement} The research of \'A. Baricz was supported by the J\'anos Bolyai Research Scholarship of the Hungarian Academy of Sciences. The authors are very grateful to the referee for his/her appropriate and constructive suggestions and for his/her proposed corrections to improve the paper.

\end{document}